\newtheorem{theorem}{Theorem}[section]
\newtheorem{lemma}[theorem]{Lemma}
\theoremstyle{definition}
\newtheorem{definition}[theorem]{Definition}
\newtheorem{question}[theorem]{Question}
\newcommand{\U}{\mathcal U}
\newcommand{\w}{\omega}
\newcommand{\IP}{\mathbb P}
\newcommand{\V}{\mathcal{V}}
\newcommand{\bigvid}{\hat{\ \ }}
\newcommand{\uhr}{\upharpoonright}
\newcommand{\name}[1]{\dot{#1}}
\newcommand{\la}{\langle}
\newcommand{\ra}{\rangle}
\newcommand{\forces}{\Vdash}
\newcommand{\hot}{\mathfrak}
\newcommand{\nothing}[1]{}
\title[$H$-separable spaces in the Laver model]{Products of $H$-separable spaces in the Laver model}
\author{Du\v{s}an Repov\v{s}  and Lyubomyr Zdomskyy} 
\address{Faculty of Mathematics and Physics,
University of Ljubljana, Jadranska 19, SI-1000 Ljubljana, Slovenia.}
\email{dusan.repovs@guest.arnes.si}
\urladdr{http://www.fmf.uni-lj.si/\~{}repovs/index.htm}
\address{Kurt G\"odel Research Center for Mathematical Logic,
University of Vienna, W\"ahringer Stra\ss e 25, A-1090 Wien,
Austria.}
\email{lzdomsky@gmail.com}
\urladdr{http://www.logic.univie.ac.at/\~{}lzdomsky/}
\subjclass[2010]{Primary: 03E35, 54D20. Secondary: 54C50, 03E05.}
\keywords{$H$-separable, $M$-separable, 
 Laver forcing, Menger space, Hurewicz space.}
\thanks{The first author was  supported by 
the Slovenian Research Agency grants P1-0292, J1-8131, J1-7025, J1-6721, and J1-5435.
The second author would
like to thank  the Austrian Science Fund FWF (Grants I 1209-N25 and I 2374-N35)
 for generous support of this research.}
\begin{document}

\begin{abstract}
We prove that in the Laver model
 for the consistency of the Borel's conjecture,
the product of any two $H$-separable spaces is $M$-separable.
\end{abstract}

\maketitle

\section{Introduction}

This paper is devoted to products of $H$-separable spaces.
A topological space $X$ is said \cite{BelBonMat09}  to be \emph{$H$-separable}, if for
every sequence $\la D_n:n\in\w\ra$ of dense subsets of $X$, one can
pick finite subsets $F_n\subset D_n$ so that every nonempty open set
$O\subset X$ meets all but finitely many $F_n$'s. If 
we only demand that $\bigcup_{n\in\w}F_n$ is dense we get the definition
of \emph{$M$-separable} spaces introduced in \cite{Sch99}. 
It is obvious that second-countable spaces 
(even spaces with a countable $\pi$-base) are $H$-separable,
and each $H$-separable space is $M$-separable. 
The main result of our paper is the following

\begin{theorem}\label{main}
In the Laver model for the consistency of the Borel's conjecture,
the product of any two countable $H$-separable  spaces
is $M$-separable. 

Consequently,
the product of any two $H$-separable  spaces
is $M$-separable provided that it is hereditarily separable.
\end{theorem}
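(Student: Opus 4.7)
\emph{Plan.} The ``hereditarily separable'' corollary reduces to the countable case by a standard thinning. Given $H$-separable $X,Y$ with $X\times Y$ hereditarily separable and dense $D_n\sbst X\times Y$, pick for each basic rectangle $U\times V$ a point of $D_n\cap(U\times V)$, forming countable dense $D_n'\sbst D_n$. Let $Z_X\sbst X$, $Z_Y\sbst Y$ be countable dense subspaces containing $\pi_X(\bigcup_n D_n')$ and $\pi_Y(\bigcup_n D_n')$ respectively; both are $H$-separable (a dense subspace of an $H$-separable space is $H$-separable, as openness in the subspace lifts to openness in the ambient), and $D_n'\sbst Z_X\times Z_Y$. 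Applying the countable case to $\la D_n':n\in\w\ra\sbst Z_X\times Z_Y$ produces finite $F_n\sbst D_n$ with $\bigcup_n F_n$ dense in $Z_X\times Z_Y$, hence in $X\times Y$.

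For the countable case, identify $X,Y$ with $\w$ with topologies $\tau_X,\tau_Y$ of bases $\la U_k:k\in\w\ra$, $\la V_l:l\in\w\ra$. For dense $D_n\sbst X\times Y$, the slices $S_n^{(l)}:=\{x\in X:\exists y\in V_l,\,(x,y)\in D_n\}$ are dense in $X$ for each $l$; fix witnesses $w(x,n,l)\in V_l$ with $(x,w(x,n,l))\in D_n$ whenever $x\in S_n^{(l)}$. Applying $H$-separability of $X$ to $\la S_n^{(l)}:n\in\w\ra$ for each fixed $l$ yields finite $E_n^{(l)}\sbst S_n^{(l)}$ such that every $U_k$ meets $E_n^{(l)}$ for cofinitely many $n$, whence every rectangle $U_k\times V_l$ is met, for cofinitely many $n$, by the set $\{(x,w(x,n,l)):x\in E_n^{(l)}\}$. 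Amalgamating over $l$ would produce a ``pre-selection'' whose union is dense in $X\times Y$, but is only countable per $n$. The remaining task is to choose a growth rate $g:\w\to\w$ so that the truncations
\[
F_n:=\bigcup_{l\le g(n)}\{(x,w(x,n,l)):x\in E_n^{(l)}\}
\]
are finite and $\bigcup_n F_n$ still meets every rectangle.

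This growth-rate selection is precisely where the Laver model is essential. The countable data $(\tau_X,\tau_Y,\la D_n:n\in\w\ra)$ reflect to some intermediate stage $V_\alpha$ of the Laver iteration ($\alpha<\w_2$), and the existence of a suitable $g$ should fall out of a preservation lemma for the countable support Laver iteration over a model of CH. The plan is to prove this lemma via a fusion argument on Laver trees, exploiting the fact that Laver trees split into infinitely many successors above each of their stems, so that a single fused condition can be made to decide both the generic $E_n^{(l)}$'s and the witnesses $w$ inside a ground-model object of controlled finite width.

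The \textbf{main obstacle} is the isolation and verification of this preservation lemma. The delicate point is that $g$ must be chosen compatibly with generic selections $E_n^{(l)}$ across \emph{all} $l\in\w$, while Laver-tree fusion controls choices only finitely far up each stem; orchestrating the fusion so that the resulting $g$ finitizes each $F_n$ while still capturing every rectangle $U_k\times V_l$ at least once is the real combinatorial work. Once this preservation is established, Theorem \ref{main} follows by direct application to the $H$-selections constructed above.
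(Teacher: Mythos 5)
Your proposal has a fatal gap at the start of the countable case: you assume that $\tau_X$ and $\tau_Y$ admit countable bases $\la U_k:k\in\w\ra$, $\la V_l:l\in\w\ra$. A countable $H$-separable space need not be first countable, let alone second countable (the relevant examples are countable dense subspaces of spaces like $C_p(X)$, which have no countable $\pi$-base); and if both factors \emph{did} have countable ($\pi$-)bases, then $X\times Y$ would itself have a countable $\pi$-base and would be $H$-separable outright, in ZFC, with no forcing needed. Under your assumption the rest of your argument indeed closes trivially --- taking $g(n)=n$ already works, since for a fixed rectangle $U_k\times V_l$ one has $l\le g(n)$ and $U_k\cap E_n^{(l)}\neq\emptyset$ for all large $n$ --- which is a sign that the assumption has erased the actual difficulty. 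The real problem is that the quantifier ``for every nonempty open $U\sbst X$ (resp.\ $V\sbst Y$)'' ranges over a family of size continuum, so no single countable indexing of slices can witness density against all rectangles, and no growth rate $g:\w\to\w$ chosen against countably many constraints can help.

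This is exactly the point the paper's argument is built around, and it is also where your ``preservation lemma'' remains entirely unproved: you describe a fusion plan but never state the lemma, and as formulated (a growth rate compatible with countably many generic selections) it is aimed at the wrong target. The paper instead isolates a property (\emph{box-separability}) asserting that some family of only $\w_1$ countable collections $\U$ of open sets suffices to ``capture'' every open $U\sbst X$ through the selector $R(\U)$; the forcing content (Lemma~\ref{laver}) is that in the Laver model every countable $H$-separable space has this property, proved via Laver's pure-decision lemma for names of subsets of $\w$ together with a case analysis on whether the finitely many candidate values $U\in\U_s$ have empty interior --- the empty-interior case being killed by applying $H$-separability to the dense complements $\w\setminus U$. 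The product theorem then follows (Lemma~\ref{covering_g_delta}) from a slicing argument much like yours, but with the uncountably many open sets of $X$ tamed by box-separability and the resulting $\w_1$ many selection sequences unified into one using $\hot b>\w_1$, which holds in the Laver model. To repair your proposal you would need to (a) drop the countable-base assumption, (b) formulate precisely what the Laver iteration must preserve about selections indexed by \emph{all} open sets, and (c) actually carry out the fusion argument; at that point you would essentially be reconstructing the paper's Lemmas~\ref{laver} and~\ref{covering_g_delta}.
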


It worth mentioning here that by \cite[Theorem~1.2]{RepZdo10} the 
 equality $\hot b=\hot c$ which holds in the Laver model implies 
that the $M$-separability is not preserved by finite products of countable
spaces in the strong sense.
  
Let us recall that a topological space
$X$ is said to have the  \emph{Menger property} (or, alternatively, is a \emph{Menger space})
 if for every sequence $\la \U_n : n\in\omega\ra$
of open covers of $X$ there exists a sequence $\la \V_n : n\in\omega \ra$ such that
each $\V_n$ is a finite subfamily of $\U_n$ and the collection $\{\cup \V_n:n\in\omega\}$
is a cover of $X$. This property was introduced by  Hurewicz, and the current name
(the Menger property) is used because Hurewicz
proved in   \cite{Hur25} that for metrizable spaces his property is equivalent to
a certain  property of a base considered by Menger in \cite{Men24}.
If in the definition above we additionally require that  $\{n\in\w:x\not\in\cup\V_n\}$ 
is finite for each $x\in X$,
then we obtain the definition of the \emph{Hurewicz  property}  introduced
in \cite{Hur27}. The original idea behind the Menger's   property,
as it is explicitly stated in the first paragraph of \cite{Men24},  
was an application in dimension theory,
 one of the areas of
interest of Marde\v{s}i\'c.
However,  this paper 
concentrates on  set-theoretic and combinatorial aspects of the property of Menger
and its variations.

Theorem~\ref{main} is closely related to the main result of \cite{RepZdo??}
asserting that in the Laver model the product of any
two Hurewicz metrizable spaces has the Menger property. Let us note that 
our proof in \cite{RepZdo??} is 
 conceptually 
different, even though both proofs are based on  the same main technical lemma of \cite{Lav76}.
Regarding the relation between Theorem~\ref{main} and the main result of \cite{RepZdo??},
each of them implies a weak form of the other one via
the following duality results:
For a metrizable space $X$, $C_p(X)$ is $M$-separable 
(resp. $H$-separable) if and only if all finite powers of $X$ are Menger (resp. Hurewicz),
see \cite[Theorem~35]{Sch99} and \cite[Theorem~40]{BelBonMat09}, respectively.
 Thus Theorem~\ref{main} (combined
with the well-known fact that $C_p(X)$ is hereditarily separable for metrizable separable spaces
$X$) implies that in the Laver model, if all finite powers of metrizable separable
 spaces $X_0, X_1$
are Hurewicz, then $X_0\times X_1$ is Menger. And vice versa: The main result of
\cite{RepZdo??} implies that in the Laver model,
the product of two $H$-separable spaces of the form $C_p(X)$ for a metrizable separable
$X$, is   $M$-separable. 

The proof of Theorem~\ref{main}, which is  
 based on the analysis of names for reals in the  style of \cite{Lav76}, 
 unfortunately seems   to be rather tailored for the $H$-separability and we were 
not able to prove any
analogous results even for small variations thereof. Recall from \cite{GruSak11} that a 
space $X$ is said to be \emph{$wH$-separable} if for any \emph{decreasing}  sequence 
$\la D_n:n\in\w\ra$ of dense subsets of $X$, one can pick
finite subsets $F_n\subset D_n$ such that for any non-empty open $U\subset X$ 
the set $\{n\in\w:U\cap F_n\neq\emptyset\}$ is co-finite. It is clear that every $H$-separable
space is $wH$-separable, and it seems to be unknown whether the converse is
(at least consistently) true. Combining \cite[Lemma~2.7(2) and Corollary~4.2]{GruSak11}
we obtain that every countable Fr\'echet-Urysohn space is $wH$-separable,  
and to our best knowledge it is open whether  countable Fr\'echet-Urysohn spaces must be $H$-separable. 
The statement  
 ``finite products of countable Fr\'echet-Urysohn spaces are $M$-separable''  is known to be independent from
ZFC: It follows from the PFA by \cite[Theorem~3.3]{BarDow12}, holds in the Cohen model
by \cite[Corollary~3.2]{BarDow12}, and fails under CH by \cite[Theorem~2.24]{BarDow11}.
Moreover\footnote{We do not know whether the spaces constructed in the proof of \cite[Theorem~2.24]{BarDow11}
are $H$-separable.}, CH implies the existence of two countable Fr\'echet-Urysohn $H$-separable
topological groups whose product is not $M$-separable, see \cite[Corollary~6.2]{MilTsaZso16}. 
These results motivate the following 
\begin{question}
\begin{enumerate}
 \item Is it consistent that the  product of two countable $wH$-separable spaces is $M$-separable?
Does this statement hold in the Laver model?
\item Is the product of two countable Fr\'echet-Urysohn space $M$-separable in the Laver model?
\item Is the product of three (finitely many) countable $H$-separable spaces $M$-separable in the 
Laver model?
\item  Is the product of finitely many countable $H$-separable spaces $H$-separable in the 
Laver model?
\end{enumerate}
\end{question}

\section{Proof of Theorem~\ref{main}}

We need  the following

\begin{definition}
 A topological space $\la X,\tau\ra$ is called \emph{box-separable}
if for every  function $R$
assigning to each countable family  $\U$ of non-empty open subsets of $X$ 
a sequence $R(\U)=\la F_n:n\in\w\ra$ of finite non-empty subsets of $X$
such that $\{n:F_n\subset U\}$ is infinite for every $U\in\U$,
there exists $\mathsf U  \subset [\tau\setminus\{\emptyset\}]^{\w}$ 
of size $|\mathsf U|=\w_1$ such that for all $U\in\tau\setminus\{\emptyset\}$
there exists $\U\in\mathsf U$ such that
$\{n:R(\U)(n)\subset U\}$ is infinite.
\end{definition}

Any countable space is obviously box-separable  under CH,
which makes the latter notion uninteresting when  considered in arbitrary ZFC models.
However,  as we shall see in Lemma~\ref{covering_g_delta}, the box-separability becomes useful
under $\hot b>\w_1$. Here  $\hot b$ denotes the minimal cardinality of
a subspace $X$ of $\w^\w$ which is not eventually dominated by a single
function, see \cite{Bla10} for more information on $\hot b$ and other cardinal characteristics
of the reals. 

The following lemma is the key part of the proof of Theorem~\ref{main}.
 We will use the notation from \cite{Lav76} with  the only difference being  that
smaller conditions in a forcing poset  are supposed to
 carry more information about the generic filter, and the ground model is denoted by $V$.

A subset $C$ of $\w_2$ is called an
\emph{$\w_1$-club} if it is unbounded and for every $\alpha\in\w_2$ of cofinality $\w_1$,
if $C\cap\alpha$ is cofinal in $\alpha$ then $\alpha\in C$.

\begin{lemma} \label{laver}
 In the Laver model every countable $H$-separable  space is box-separable.
\end{lemma}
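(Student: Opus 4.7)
The plan is to fix an intermediate model of the Laver iteration at a stage of cofinality $\w_1$, in which $CH$ still holds, and to let the witnessing family $\mathsf U$ consist of the countable collections of open sets that already live there; the main work is then to verify this $\mathsf U$ is sufficient, using Laver's capture lemma to control open sets added later and $H$-separability to produce finite subsets lying inside them.

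In detail, let $V$ be the ground model of $CH$ and let $\IP=\IP_{\w_2}$ denote the countable support iteration of Laver forcing of length $\w_2$, with $G\subseteq\IP$ generic. Choose $\IP$-names $\dot X=(\w,\dot\tau)$ and $\dot R$ for the given countable $H$-separable space and the function from the hypothesis of box-separability. Every $\IP_\alpha$ with $\alpha<\w_2$ has cardinality $\w_1$, so each intermediate model $V[G_\alpha]$ satisfies $CH$. Using properness together with a standard reflection for names of countable objects, I would fix $\alpha_0<\w_2$ of cofinality $\w_1$ in a suitable $\w_1$-club for which every countable $\mathcal U\in V[G_{\alpha_0}]$ consisting of non-empty elements of $\dot\tau^G$ has its $\dot R$-value already computed inside $V[G_{\alpha_0}]$. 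In $V[G]$ I then define
\[
 \mathsf U\ :=\ \bigl\{\mathcal U\in V[G_{\alpha_0}]:\mathcal U\in[\tau\setminus\{\emptyset\}]^{\w}\bigr\},
\]
whose cardinality is at most $\w_1$ by $CH$ in $V[G_{\alpha_0}]$.

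Verifying that $\mathsf U$ witnesses box-separability is then the heart of the argument. For $U\in\tau\setminus\{\emptyset\}$ already lying in $V[G_{\alpha_0}]$, one may take any countable $\mathcal U\in\mathsf U$ with $U\in\mathcal U$ (padded with $X$ if necessary) and invoke the defining property of $R$ directly. The nontrivial case is $U\in V[G]\setminus V[G_{\alpha_0}]$; here I would apply the main technical lemma of \cite{Lav76}, together with its stem-preserving refinement, to a $\IP/G_{\alpha_0}$-name $\dot U$ for $U$. This yields a tail condition (with prescribed stem) and a countable family $\mathcal W\in V[G_{\alpha_0}]$ of open subsets of $X$ that captures the behaviour of $\dot U$ in a finitary way along the condition. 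The $H$-separability of $X$, applied inside $V[G_{\alpha_0}]$ to a sequence of dense sets extracted from $\mathcal W$, then produces finite sets that, for cofinitely many $n$, meet every element of $\mathcal W$ and hence, by the capturing, are forced inside $U$ for infinitely many $n$. Consolidating these finite approximations into a single countable $\mathcal U\in V[G_{\alpha_0}]$ supplies the required element of $\mathsf U$.

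The main obstacle is the quantifier mismatch between the box-style conclusion $R(\mathcal U)(n)\subseteq U$ and the merely ``meets''-style output provided by $H$-separability: Laver's lemma captures $\dot U$ only approximately, and one must shrink the captured family $\mathcal W$ finely enough that every finite set chosen by $H$-separability is already forced into $U$ by the stem of the Laver condition. Arranging this shrinking so that $\mathcal W$ stays countable and in $V[G_{\alpha_0}]$, while simultaneously making the $R$-image of the resulting $\mathcal U$ behave uniformly for cofinitely many $n$, is the technical heart of the proof, and is where the full strength of $H$-separability, as opposed to merely $M$-separability, is expected to be used.
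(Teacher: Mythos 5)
Your setup matches the paper's: reflect to an intermediate model (the paper takes an $\w_1$-club $C$ of reflecting ordinals and, after re-basing, uses $\mathsf U=[\tau\setminus\{\emptyset\}]^\w\cap V$, which is exactly your $V[G_{\alpha_0}]$ idea), and then attack a ``new'' open set $A$ by applying Laver's Lemma~14 to a name $\name{A}$, obtaining finite capturing families $\U_s$ along the first coordinate of the condition. But the step you flag as ``the technical heart'' is precisely the part that is missing, and the way you propose to close it would not work. First, the sets produced by Laver's lemma are arbitrary subsets of $\w$ (possible traces $U$ with $\name{A}\cap n=U\cap n$ forced), not open sets, so your family $\mathcal W$ of ``open subsets capturing $\dot U$'' does not exist as described. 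Second, $H$-separability can never be made to output finite sets \emph{contained in} a given open set --- it only gives finite sets \emph{meeting} dense sets --- so no amount of shrinking $\mathcal W$ resolves your quantifier mismatch along the route you sketch.

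The paper's resolution is a dichotomy you don't have. Call $U\in\U_s$ void if, below a given condition, it eventually stops being a candidate trace of $\name{A}$; some non-void $U$ always survives. Case (a): if one can always find a non-void $U$ with $\mathit{Int}(U)\neq\emptyset$, then the countable family of these interiors lies in the ground model, hence in $\mathsf U$, and the defining property of $R$ applied to it (not $H$-separability!) produces $R(\U)(k)\subset\mathit{Int}(U)\subset U$ for infinitely many $k$; since $\name{A}\cap n=U\cap n$ is forced on infinitely many successors, this forces $R(\U)(k)\subset\name{A}$, contradicting the assumed failure. Case (b): if below some $p''$ every non-void $U$ has empty interior, then the complements $\w\setminus U$ are \emph{dense}, and $H$-separability is applied to the sequence of these complements, yielding finite sets $K_k$ that eventually meet every nonempty open set (so $p''$ forces $K_k\cap\name{A}\neq\emptyset$) yet are disjoint from $U$; the capture $\name{A}\cap n=U\cap n$ with $n>\max K_k$ then forces $K_k\cap\name{A}=\emptyset$, a contradiction. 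So $H$-separability enters only negatively, through complements of empty-interior approximants, while the ``finite set inside $U$'' conclusion comes from $R$ itself in the good case. Without this void/non-void bookkeeping and the interior dichotomy (which also requires the reflection condition that interiors of ground-model sets are in the ground model), your argument does not go through.
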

\begin{proof}
We work in $V[G_{\w_2}]$, where $G_{\w_2}$ is $\IP_{\w_2}$-generic
and $\IP_{\w_2}$ is the iteration of length $\w_2$ with countable supports
of the Laver forcing, see \cite{Lav76} for details.
 Let us fix an $H$-separable  space of the form $\la\w,\tau\ra$
and a function $R$ such as in the definition of  box-separability.
By a standard argument (see, e.g., the proof of \cite[Lemma~5.10]{BlaShe87})
there exists an $\w_1$-club $C\subset \w_2$ such that for every $\alpha\in C$ 
the following conditions hold:
\begin{itemize}
 \item[$(i)$] $\tau\cap V[G_\alpha] \in V[G_\alpha]$   and
for every sequence $\la D_n:n\in\w\ra\in V[G_\alpha]$ of dense subsets of
$\la \w,\tau\ra$ there exists a sequence  $\la K_n:n\in\w\ra\in V[G_\alpha]$
such that $K_n\in [D_n]^{<\w}$ and for every $U\in\tau\setminus\emptyset$
the intersection $U\cap K_n$ is non-empty for all but finitely many
$n\in\w$;
\item[$(ii)$] $R(\U)\in V[G_\alpha]$ for any $\U\in [\tau\setminus\{\emptyset\}]^\w\cap V[G_\alpha]$; and
\item[$(iii)$] For every $A\in\mathcal P(\w)\cap V[G_\alpha]$ the interior
$\mathit{Int}(A)$ also belongs to $V[G_\alpha]$.
\end{itemize}
By \cite[Lemma~11]{Lav76}
there is no loss of generality
in assuming that $0\in C$. We claim that $\mathsf U:=[\tau\setminus\{\emptyset\}]^\w\cap V$
is a witness for $\la\w,\tau\ra$ being box-separable. Suppose, contrary to our claim, that
there exists $A\in\tau\setminus\{\emptyset\}$ such that $R(\U)(n)\not\subset A$ for all but finitely many
$n\in\w$ and $\mathcal U\in\mathsf U$. Let $\name{A}$ be a $\IP_{\w_2}$-name for $A$ and $p\in\IP_{\w_2}$
 a condition forcing the above statement.
 Applying \cite[Lemma~14]{Lav76}
to the sequence $\la \name{a}_i:i\in\w\ra$ such that $\name{a}_i=\name{A}$ for all $i\in\w$,
we get a condition $p'\leq p$ such that $p'(0)\leq^0 p(0)$, and a finite set 
$\U_s\subset\mathcal P(\w)$
 for every $s\in p'(0)$ with $p'(0)\la 0\ra\leq s$, such that for each $n\in\w$,
  $s\in p'(0)$ with $p'(0)\la 0\ra\leq s$, and for all but finitely many
immediate successors $t$ of $s$ in $p'(0)$ we have
$$  p'(0)_t\bigvid p'\uhr[1,\w_2)\forces \exists U\in \U_s\: (\name{A}\cap n= U\cap n). $$
Of course, any $p''\leq p'$ also has the property above with the same 
$\U_s$'s. However, the stronger $p''$ is, the more elements 
of $\U_s$ might play no role any more. Therefore throughout the rest of the proof we
shall call $U\in\U_s$ \emph{void for $p''\leq p'$ and  $s\in p''(0)$}, where 
$p''(0)\la 0\ra\leq s$, if there exists $n\in\w$ such that for all but finitely many
immediate successors $t$ of $s$ in $p''(0)$ there is \emph{no} 
$q\leq p''(0)_t\bigvid p''\uhr[1,\w_2)$
with the property $q\forces \name{A}\cap n= U\cap n.$ 
Note that for any $p''\leq p'$ and $s\in p''(0)$,  
$p''(0)\la 0\ra\leq s$, there exists $U\in\U_s$ which is non-void for $p'',s$.
Two cases are possible.
\smallskip

$a)$  For every  $p''\leq p'$ there exists $s\in p''(0)$,  
$p''(0)\la 0\ra\leq s$, and a non-void $U\in\U_s$ for $p'',s$ such that
$\mathit{Int}(U)\neq\emptyset$. In this case let $\U\in\mathsf U$ be any countable family
containing $\{\mathit{Int}(U):U\in\bigcup_{s\in p'(0),p'(0)\la 0\ra\leq s}\U_s\}\setminus\{\emptyset\}$.
It follows from the above that $p$ forces 
$R(\U)(k)\not\subset \name{A}$ for all but finitely many
$k\in\w$. Let $p''\leq p'$ and $m\in\w$ be such that
$p''$ forces $R(\U)(k)\not\subset \name{A}$ for all $k\geq m$. 
Fix a non-void $U$ for $p'', s$, where $s\in p''(0)$ and  
$p''(0)\la 0\ra\leq s$, such that $\mathit{Int}(U)\neq\emptyset$ (and hence $\mathit{Int}(U)\in\U$).  
It follows from the above that there exists $k\geq m$ such that
$R(\U)(k)\subset \mathit{Int}(U)\subset U$. Let $n\in\w$ be such that
$R(\U)(k)\subset n$. By the definition of being non-void 
there are infinitely many
immediate successors $t$ of $s$ in $p''(0)$  for which there exists 
$q_t\leq p''(0)_t\bigvid p''\uhr[1,\w_2)$
with the property $q_t\forces \name{A}\cap n= U\cap n.$ Then for any $q_t$
as above we have that $q_t$ forces $R(\U)(k)\subset \name{A}$
because  $R(\U)(k)\subset U\cap n$, which contradicts the fact that
$q_t\leq p''$ and $p''\forces R(\U)(k)\not\subset \name{A} $.
\smallskip

$b)$  There exists   $p''\leq p'$ such that for all  $s\in p''(0)$,  
$p''(0)\la 0\ra\leq s$, every $U\in\U_s$ with $\mathit{Int}(U)\neq\emptyset$
 is void for $p'',s$. Note that this implies that every $U\in\U_s$ with $\mathit{Int}(U)\neq\emptyset$
 is void for $q,s$ for all $q\leq p''$ and $s\in q(0)$ such that  
$q(0)\la 0\ra\leq s$.

 Let $\la D_k:k\in\w\ra\in V$ be a sequence 
of dense subsets of $\la\w,\tau\ra$ such that for every 
$U\in\bigcup_{s\in p''(0),p''(0)\la 0\ra\leq s}\U_s$, if $\mathit{Int}(U)=\emptyset$,
then $\w\setminus U=D_k$ for infinitely many $k\in\w$.
Let $\la K_k:k\in\w\ra\in V$ be such as in item $(i)$ above.
Then $p''$ forces that $K_k\cap \name{A}\neq\emptyset$ for all but finitely many $k\in\w$.
Passing to a stronger condition, we may additionally assume  if necessary, that
there exists $m\in\w$ such that $p''\forces \forall k\geq m\:(K_k\cap \name{A}\neq\emptyset)$.

Fix $U\in\U_{p''(0)\la 0\ra}$ non-void for $p'', p''(0)\la 0\ra$. 
Then   $\mathit{Int}(U)=\emptyset$ by the choice of $p''$ and hence there exists 
$k\geq m$ such that $\w\setminus U=D_k$. It follows that
$K_k\cap U=\emptyset$ because $K_k\subset D_k$. On the other hand,
since $U$ is non-void for $p'', p''(0)\la 0\ra$, for $n=\max K_k+1$
we can find infinitely many 
immediate successors $t$ of $p''(0)\la 0\ra$ in $p''(0)$ for which there exists 
$q_t\leq p''(0)_t\bigvid p''\uhr[1,\w_2)$
forcing $\name{A}\cap n= U\cap n.$ Then any such $q_t$ forces $K_k\cap\name{A}=\emptyset$
(because $K_k\subset n$ and $K_k\cap U=\emptyset$),
contradicting the fact that $p''\geq q_t$ and $p''\forces K_k\cap\name{A}\neq\emptyset$.
\smallskip

Contradictions obtained in cases $a)$ and $b)$ above imply that
$\mathsf U:=[\tau\setminus\{\emptyset\}]^\w\cap V$
is a witness for $\la\w,\tau\ra$ being box-separable, which completes our proof. 
\end{proof}

Theorem~\ref{main} is a direct consequence of Lemma~\ref{laver} combined with the following

\begin{lemma} \label{covering_g_delta}
Suppose that $\hot b>\w_1$, $X$ is box-separable,  and $Y$ is $H$-separable.
Then $X\times Y$ is $M$-separable provided that it is separable.
\end{lemma}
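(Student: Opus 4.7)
The plan is to combine the $H$-separability of $Y$ with box-separability of $X$ to produce an $\w_1$-indexed family of candidate finite sets, and then use $\hot b>\w_1$ to merge them into a single $M$-separability witness. Fix a sequence $\la D_n:n\in\w\ra$ of dense subsets of $X\times Y$ and a partition $\w=\bigsqcup_{k\in\w}A_k$ into infinite pieces, letting $k(n)$ denote the unique $k$ with $n\in A_k$.

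For each countable $\cU=\{U_k:k\in\w\}\sbst\tau_X\sm\{\emptyset\}$, the projections $D_n^{U_k}:=\{y\in Y:\exists x\in U_k,\,(x,y)\in D_n\}$ are dense in $Y$, so $H$-separability of $Y$ applied (for each $k$) to $\la D_n^{U_k}:n\in A_k\ra$ yields finite $H_n^k\sbst D_n^{U_k}$ such that every non-empty open $V\sbst Y$ meets $H_n^k$ for all but finitely many $n\in A_k$. Choosing $x_{n,y}\in U_{k(n)}$ with $(x_{n,y},y)\in D_n$ for each $y\in H_n^{k(n)}$ gives a finite set $F_n^{\cU}:=\{(x_{n,y},y):y\in H_n^{k(n)}\}\sbst D_n$ with $\pi_X(F_n^{\cU})\sbst U_{k(n)}$. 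Setting $R(\cU)(n):=\pi_X(F_n^{\cU})$ (possibly enriched; see below), the hypothesis of box-separability holds, since $\{n:R(\cU)(n)\sbst U_{k_0}\}\spst A_{k_0}$ for each $k_0$. Box-separability then delivers a family $\mathsf U=\{\cU^\alpha:\alpha<\w_1\}$ such that for every non-empty open $U\sbst X$ some $\cU^\alpha\in\mathsf U$ makes $I:=\{n:R(\cU^\alpha)(n)\sbst U\}$ infinite.

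To collapse the $\w_1$ sequences $\la F_n^{\cU^\alpha}\ra_{\alpha<\w_1}$ into a single one, enumerate a countable dense subset of each $D_n$ as $\{d_n^j:j\in\w\}$ (straightforward in the application, where $X\times Y$ is countable); each $F_n^{\cU^\alpha}$ then corresponds to a finite $J_n^\alpha\sbst\w$, and $f_\alpha(n):=\max J_n^\alpha$ yields an $\w_1$-sized subset of $\w^\w$. Since $\hot b>\w_1$, some $g:\w\to\w$ dominates all $f_\alpha$; setting $F_n:=\{d_n^j:j\leq g(n)\}$ produces finite $F_n\sbst D_n$ with $F_n\spst F_n^{\cU^\alpha}$ for every $\alpha$ and all but finitely many $n$. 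For density of $\bigcup_n F_n$ in $X\times Y$, given a basic open $U\times V$ one picks $\alpha$ with $I$ infinite; for $n\in I$, $F_n^{\cU^\alpha}\sbst U\times Y$ and $F_n^{\cU^\alpha}\cap(U\times V)\neq\emptyset$ iff $H_n^{k(n)}\cap V\neq\emptyset$, so finding a single such $n$ suffices to conclude that $F_n$ meets $U\times V$.

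The principal obstacle, which I expect to be the technical heart of the proof, is precisely the last step: the infinite set $I$ might spread across infinitely many pieces $A_k$ with only finitely many indices in each, so that none of those indices avoids the finitely many `bad' $n\in A_k$ for which $H_n^k\cap V=\emptyset$. I would address this by refining both $R$ and the partition: in the countable setting relevant to the application one fixes an enumeration $X=\{x_i:i\in\w\}$, chooses a sparse partition such as $\min A_k\geq k$, and replaces $R(\cU)(n)$ by $\{x_i:i\leq n,\,x_i\in U_{k(n)}\}$. Then $R(\cU)(n)\sbst U$ forces $\{x_0,\ldots,x_n\}\cap U_{k(n)}\sbst U$, so for large $n\in A_k$ the condition $R(\cU^\alpha)(n)\sbst U$ cannot hold unless $U_k^\alpha\sbst U$; consequently $I$ infinite implies $A_k\sbst I$ for some $k$ with $U_k^\alpha\sbst U$, at which point the cofinite set of `good' $n\in A_k$ for $V$ supplies the required witness and the density verification closes.
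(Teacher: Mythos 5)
Your overall architecture coincides with the paper's: project the $D_n$'s into $Y$ along a partition indexed by the members of $\U$, use $H$-separability of $Y$ to select finite sets of second coordinates, lift them back to finite subsets of $D_n$ whose $X$-projections $K^{\U}_n$ lie in $U_{k(n)}$, feed $R(\U)=\la K^{\U}_n:n\in\w\ra$ to box-separability, and use $\hot b>\w_1$ to dominate the $\w_1$ many candidate selections by a single sequence $\la F_n:n\in\w\ra$. You also correctly isolate the one delicate point: the infinite set $I=\{n:R(\U)(n)\sbst U\}$ supplied by box-separability may meet each piece $A_k$ in a finite set. However, your proposed repair does not close this gap. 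Enriching $R(\U)(n)$ to $\{x_i:i\le n,\ x_i\in U_{k(n)}\}$ only shows that for each fixed $k$ with $U_k\not\sbst U$ there is a threshold $j_k$ (the least index of a point of $U_k\sm U$) beyond which no $n\in A_k$ can belong to $I$; since $j_k$ depends on $k$ and is unbounded in general, $I$ can perfectly well be infinite while meeting every $A_k$ finitely and while no $U_k$ is contained in $U$. So the asserted implication ``$I$ infinite $\Rightarrow$ $A_k\sbst I$ for some $k$ with $U_k\sbst U$'' fails, and box-separability gives you no control over which $\U\in\mathsf U$ witnesses a given $U$, so you cannot exclude this scenario; the sparseness condition $\min A_k\ge k$ does not help either.

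The correct fix is simpler and is precisely where $H$-separability of $Y$ (as opposed to countably many applications of it) is used: apply it \emph{once} to the single interleaved sequence $\la D^{U_{k(n)}}_n:n\in\w\ra$, whose $n$-th term is the projection of $D_n$ along the piece containing $n$, rather than separately to each subsequence $\la D^{U_k}_n:n\in A_k\ra$. This yields finite $L_n\sbst D^{U_{k(n)}}_n$ such that every nonempty open $V\sbst Y$ meets $L_n$ for all but finitely many $n\in\w$ --- cofinitely in $\w$, not merely cofinitely within each $A_k$. Then in the density check, whatever infinite $I$ box-separability hands you and however it is scattered among the pieces, all but finitely many $n\in I$ satisfy both $F_n\spst (K^{\U}_n\times L_n)\cap D_n$ and $L_n\cap V\neq\emptyset$, and any such $n$ produces a point of $F_n\cap(U\times V)$. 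With this change the unmodified $R(\U)(n)=K^{\U}_n$ suffices and no enrichment of $R$ or special choice of the partition is needed.
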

\begin{proof}
Let $\la D_n:n\in\w\ra$ be a sequence of countable dense subsets of $X\times Y$. 
Let us fix a countable family $\U$ of open non-empty subsets of 
$X$ and a partition $\w=\sqcup_{U\in\U}\Omega_U$ into infinite pieces.
For every $n\in \Omega_U$ set $D^\U_n=\{y\in Y:\exists x\in U (\la x,y\ra\in D_n)\}$
and note that $D^\U_n$ is dense in $Y$ for all $n\in\w$. Therefore there exists a sequence
$\la L^\U_n:n\in\w\ra$ such that $L^\U_n\in [D^\U_n]^{<\w}$ and for every open non-empty
$V\subset Y$ we have $L^\U_n\cap V\neq\emptyset$ for all but finitely many $n$. 
For every $n\in\Omega_U$ find $K^\U_n\in [U]^{<\w}$ such that
for every $y\in L^\U_n$ there exists $x\in K^\U_n$
such that $\la x,y\ra\in D_n$, and set $R(\U)=\la K^\U_n:n\in\w\ra$. Note that
$R$ is such as in the definition of box-separability because $K^\U_n\subset U$
for all $n\in \Omega_U$ and the latter set is infinite. 
Since $X$ is box-separable there exists a family $\mathsf U$ of countable collections of 
open non-empty subsets of $X$ of size $|\mathsf U|=\w_1$, and 
such that for every open non-empty $U\subset X$ there exists $\U\in\mathsf U$
with the property $R(\U)(n)\subset U$ for infinitely many $n$. Since each $D_n$
is countable and $|\U|<\hot b$, there exists a sequence
$\la F_n:n\in\w\ra$ such that $F_n\in [D_n]^{<\w}$ and for every $\U\in\mathsf U$
we have $F_n\supset (K^\U_n\times L^\U_n)\cap D_n$ for all but finitely many 
$n\in\w$. 

We claim that $\bigcup_{n\in\w}F_n$ is dense in $X\times Y$.
Indeed, let us fix open non-empty subset of $X\times Y$ of the form $U\times V$
and find $\U\in\mathsf U$ with the property $R(\U)(n)=K^\U_n\subset U$ for infinitely many $n$, 
say for all $n\in I\in [\w]^\w$. Passing to a co-finite subset of $I$, 
we may assume if necessary, that $F_n\supset (K^\U_n\times L^\U_n)\cap D_n$ for all $n\in I$.
Finally, fix $n\in I$ such that $L^\U_n\cap V\neq\emptyset$ and pick $y\in L^\U_n\cap V$.
By the definition of $D^\U_n$ and $L^\U_n\subset D^\U_n$ we can find
$x\in K^\U_n$ such that $\la x,y\ra\in D_n$. Then $\la x,y\ra\in U\times V$
and $\la x,y\ra\in F_n$ because $\la x,y\ra\in K^\U_n\times L^\U_n$
and $\la x,y\ra\in D_n $. This completes our proof.
\end{proof}

\end{document}